%,

\documentclass[12pt,fleqn]{article}

\usepackage{amsmath,amsthm,amssymb}
\usepackage{geometry} \geometry{margin=1.01in}
\usepackage[pdfpagemode=UseNone,pdfstartview=FitH]{hyperref}

\newcommand{\Ap}[1][]{A_p\, #1}
\newcommand{\abs}[1]{\left|#1\right|}
\newcommand{\Bp}[1][]{B_p\, #1}
\newcommand{\bdry}[1]{\partial #1}
\newcommand{\closure}[1]{\overline{#1}}

\newcommand{\dint}{\ds{\int}}
\newcommand{\dist}[2]{\text{dist}\, (#1,#2)}
\newcommand{\dnorm}[2][]{\left\|#2\right\|_{#1}^\ast}
\newcommand{\ds}[1]{\displaystyle #1}
\newcommand{\dualp}[3][]{\left(#2,#3\right)_{#1}}
\newcommand{\eps}{\varepsilon}
\newcommand{\F}{{\cal F}}
\newcommand{\half}{\frac{1}{2}}
\newcommand{\M}{{\cal M}}
\newcommand{\minuszero}{\! \setminus\! \set{0}}
\newcommand{\N}{\mathbb N}
\newcommand{\norm}[2][]{\left\|#2\right\|_{#1}}

\renewcommand{\o}{\text{o}}
\newcommand{\PS}[1]{$(\text{PS})_{#1}$}
\newcommand{\pnorm}[2][]{\if #1'' \left|#2\right|_p \else \left|#2\right|_{#1} \fi}

\newcommand{\R}{\mathbb R}

\newcommand{\seq}[1]{\left(#1\right)}
\newcommand{\set}[1]{\left\{#1\right\}}

\newcommand{\Z}{\mathbb Z}

\DeclareMathOperator{\divg}{div}
\DeclareMathOperator{\supp}{supp}

\newtheorem{lemma}{Lemma}[section]

\newtheorem{theorem}[lemma]{Theorem}

\theoremstyle{definition}

\theoremstyle{remark}
\newtheorem{remark}[lemma]{Remark}

\numberwithin{equation}{section}

\title{\bf A multiplicity result for critical elliptic problems involving differences of local and nonlocal operators\thanks{{\em MSC2010:} Primary 35R11, Secondary 35B33, 35J20
\newline \indent\; {\em Key Words and Phrases:} critical elliptic problems, differences of local and nonlocal operators, multiplicity of solutions}}
\author{\bf Kanishka Perera\\
Department of Mathematical Sciences\\
Florida Institute of Technology\\
Melbourne, FL 32901, USA\\
\em kperera@fit.edu\\
[\bigskipamount]
\bf Caterina Sportelli\\
Dipartimento di Matematica\\
Università degli Studi di Bari Aldo Moro\\
Via E. Orabona 4, 70125 Bari, Italy\\
\em caterina.sportelli@uniba.it}
\date{}

\begin{document}

\maketitle

\begin{abstract}
We study some critical elliptic problems involving the difference of two nonlocal operators, or the difference of a local operator and a nonlocal operator. The main result is the existence of two nontrivial weak solutions, one with negative energy and the other with positive energy, for all sufficiently small values of a parameter. The proof is based on an abstract result recently obtained in \cite{MR4293883}.
\end{abstract}

\section{Introduction}

Elliptic problems involving sums of nonlocal operators have been recently studied in the literature (see, e.g., \cite{MR3927428, MR3876875, MR3915604, MR3910033, MR4313576, MR4160747, MR4142328} and their references). The purpose of the present paper is to consider critical elliptic problems involving differences of such operators. Here a new phenomenon appears: there exist two nontrivial weak solutions, one with negative energy and the other with positive energy, for all sufficiently small values of a parameter.

We consider the nonlocal problem
\begin{equation} \label{2}
\left\{\begin{aligned}
(- \Delta)_p^s\, u - \mu\, (- \Delta)_q^t\, u & = \lambda\, |u|^{p-2}\, u + |u|^{p_s^\ast - 2}\, u && \text{in } \Omega\\[10pt]
u & = 0 && \text{in } \R^N \setminus \Omega,
\end{aligned}\right.
\end{equation}
where $\Omega$ is a bounded domain in $\R^N$ with Lipschitz boundary, $0 < t < s < 1$, $1 < q < p < N/s$, $(- \Delta)_p^s$ is the fractional $p$-Laplacian operator defined on smooth functions by
\[
(- \Delta)_p^s\, u(x) = 2 \lim_{\eps \searrow 0} \int_{\R^N \setminus B_\eps(x)} \frac{|u(x) - u(y)|^{p-2}\, (u(x) - u(y))}{|x - y|^{N+sp}}\, dy, \quad x \in \R^N,
\]
$p_s^\ast = Np/(N - sp)$ is the fractional critical Sobolev exponent, and $\lambda, \mu > 0$ are parameters. Let $\pnorm{\cdot}$ denote the norm in $L^p(\R^N)$, let
\[
[u]_{s,\,p} = \left(\int_{\R^{2N}} \frac{|u(x) - u(y)|^p}{|x - y|^{N+sp}}\, dx dy\right)^{1/p}
\]
be the Gagliardo seminorm of a measurable function $u : \R^N \to \R$, and let
\[
W^{s,\,p}(\R^N) = \set{u \in L^p(\R^N) : [u]_{s,\,p} < \infty}
\]
be the fractional Sobolev space endowed with the norm
\[
\norm[s,\,p]{u} = \left(\pnorm{u}^p + [u]_{s,\,p}^p\right)^{1/p}.
\]
We work in the closed linear subspace
\[
W^{s,\,p}_0(\Omega) = \set{u \in W^{s,\,p}(\R^N) : u = 0 \text{ a.e.\! in } \R^N \setminus \Omega},
\]
equivalently renormed by setting $\norm{\cdot} = [\cdot]_{s,\,p}$. A weak solution of problem \eqref{2} is a function $u \in W^{s,\,p}_0(\Omega)$ satisfying
\begin{multline*}
\int_{\R^{2N}} \frac{|u(x) - u(y)|^{p-2}\, (u(x) - u(y))\, (v(x) - v(y))}{|x - y|^{N+sp}}\, dx dy\\[7.5pt]
- \mu \int_{\R^{2N}} \frac{|u(x) - u(y)|^{q-2}\, (u(x) - u(y))\, (v(x) - v(y))}{|x - y|^{N+tq}}\, dx dy - \lambda \int_\Omega |u|^{p-2}\, uv\, dx\\[7.5pt]
- \int_\Omega |u|^{p_s^\ast - 2}\, uv\, dx = 0 \quad \forall v \in W^{s,\,p}_0(\Omega).
\end{multline*}
Weak solutions coincide with critical points of the $C^1$-functional
\begin{multline*}
E(u) = \frac{1}{p} \int_{\R^{2N}} \frac{|u(x) - u(y)|^p}{|x - y|^{N+sp}}\, dx dy - \frac{\mu}{q} \int_{\R^{2N}} \frac{|u(x) - u(y)|^q}{|x - y|^{N+tq}}\, dx dy - \frac{\lambda}{p} \int_\Omega |u|^p\, dx\\[7.5pt]
- \frac{1}{p_s^\ast} \int_\Omega |u|^{p_s^\ast}\, dx, \quad u \in W^{s,\,p}_0(\Omega).
\end{multline*}

Let $\sigma((- \Delta)_p^s)$ denote the Dirichlet spectrum of $(- \Delta)_p^s$ on $\Omega$, consisting of those $\lambda \in \R$ for which the eigenvalue problem
\[
\left\{\begin{aligned}
(- \Delta)_p^s\, u & = \lambda\, |u|^{p-2}\, u && \text{in } \Omega\\[10pt]
u & = 0 && \text{in } \R^N \setminus \Omega
\end{aligned}\right.
\]
has a nontrivial weak solution. Let
\[
\dot{W}^{s,\,p}(\R^N) = \set{u \in L^{p_s^\ast}(\R^N) : [u]_{s,\,p} < \infty}
\]
endowed with the norm $\norm{\cdot}$, and let
\begin{equation} \label{1.2}
S = \inf_{u \in \dot{W}^{s,\,p}(\R^N) \setminus \set{0}}\, \frac{\dint_{\R^{2N}} \frac{|u(x) - u(y)|^p}{|x - y|^{N+sp}}\, dx dy}{\left(\dint_{\R^N} |u|^{p_s^\ast}\, dx\right)^{p/p_s^\ast}}
\end{equation}
be the best fractional Sobolev constant. We will prove the following multiplicity result.

\begin{theorem} \label{Theorem 2}
Assume that $0 < t < s < 1$, $1 < q < p < N/s$, $N \ge sp^2$, and $\lambda \in (0,\infty) \setminus \sigma((- \Delta)_p^s)$. Then $\exists\, \mu_0, \kappa > 0$ such that problem \eqref{2} has two nontrivial weak solutions $u_1$ and $u_2$ with
\[
E(u_1) < 0 < E(u_2) < \frac{s}{N}\, S^{N/sp} - \kappa \mu
\]
for all $\mu \in (0,\mu_0)$.
\end{theorem}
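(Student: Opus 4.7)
The strategy is to apply the abstract two-critical-point theorem of \cite{MR4293883} to the splitting $E = I - \mu J$, where
\[
I(u) = \frac{1}{p}\norm{u}^p - \frac{\lambda}{p}\pnorm{u}^p - \frac{1}{p_s^\ast}\pnorm[p_s^\ast]{u}^{p_s^\ast}, \qquad J(u) = \frac{1}{q}\,[u]_{t,q}^q.
\]
That abstract result yields one negative-energy critical point and a second one at a mountain-pass level below a compactness threshold, provided $I$ has the right local-min/linking geometry with mountain-pass level at most $\frac{s}{N}S^{N/sp}$ and $J \ge 0$ is a subcritical perturbation that dominates $I$ on rays near $0$.

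\textbf{Negative-energy solution.} Since $t < s$ and $q < p$, the standard fractional Sobolev embedding $W^{s,p}_0(\Omega) \hookrightarrow W^{t,q}_0(\R^N)$ makes $J$ a nonnegative $C^1$ functional with $J(0) = 0$. For any $v \in W^{s,p}_0(\Omega)$ with $[v]_{t,q} \ne 0$, $I(rv) = \O(r^p)$ and $\mu J(rv) = (\mu/q)\, r^q\, [v]_{t,q}^q$ as $r \to 0^+$; since $q < p$, $E(rv) < 0$ for small $r > 0$. The abstract theorem then produces a localized minimizer $u_1 \ne 0$ with $E(u_1) < 0$, requiring no global compactness near $u_1$.

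\textbf{Mountain-pass solution.} I would verify:
\begin{enumroman}
\item (MP geometry with level at most $\frac{s}{N}S^{N/sp}$.) For $\lambda \notin \sigma((-\Delta)_p^s)$, $I$ has a local minimum or linking structure at $0$. A fractional Brezis--Nirenberg estimate along truncated Talenti-type extremals $U_\eps$ --- for which $N \ge sp^2$ is crucial --- produces a fixed $\eps_0 > 0$ with $\sup_{r \ge 0} I(rU_{\eps_0}) \le \frac{s}{N}S^{N/sp}$.
\item ($\mu$-linear drop.) Freezing $\eps_0$, let $r_0$ be the unique positive maximizer of $r \mapsto I(rU_{\eps_0})$. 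A Taylor expansion of $V(\mu) := \sup_{r \ge 0} E(rU_{\eps_0})$ around $\mu = 0$ gives $V(\mu) = V(0) - \mu\, r_0^q [U_{\eps_0}]_{t,q}^q/q + \O(\mu^2)$, so $V(\mu) < \frac{s}{N}S^{N/sp} - \kappa \mu$ for all $\mu \in (0,\mu_0)$ with $\mu_0$ small and a fixed $\kappa > 0$.
\item ($(\mathrm{PS})_c$ below the threshold.) Standard concentration-compactness for the critical $p$-Laplacian part, combined with compactness of the subcritical $q$-term via the embedding above, yields $(\mathrm{PS})_c$ for $E$ at every $c < \frac{s}{N}S^{N/sp}$.
\end{enumroman}
Plugging these into the abstract theorem produces $u_2$ with $0 < E(u_2) < \frac{s}{N}S^{N/sp} - \kappa \mu$.

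\textbf{Main obstacle.} The crux is (i): the fractional Brezis--Nirenberg estimate $\sup_{r \ge 0} I(rU_{\eps_0}) \le \frac{s}{N}S^{N/sp}$. Truncating the untruncated optimizer introduces a positive error of order $\eps^\alpha$ in the Sobolev quotient, and the subcritical $-\lambda \pnorm{u}^p$ term must absorb it; this is precisely where the dimension bound $N \ge sp^2$ and the exclusion $\lambda \notin \sigma((-\Delta)_p^s)$ are used. Once (i) is secured, the $\mu$-linear gain in (ii) is a soft envelope-type computation at fixed $\eps_0$, and the compactness in (iii) is standard.
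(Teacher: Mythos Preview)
Your plan is essentially correct for $0 < \lambda < \lambda_1$, but it misses the main difficulty of the theorem: the range $\lambda > \lambda_1$. There the functional $I$ has no local minimum at $0$, so the mountain-pass geometry is simply absent and the one-ray estimate $\sup_{r \ge 0} I(rU_{\eps_0})$ in your item (i) is not a minimax level for anything. The paper handles this via a cohomological-index linking (its Theorem~\ref{Theorem 3}): with $\lambda_k < \lambda < \lambda_{k+1}$ one builds a compact symmetric set $C \subset \Psi^\lambda$ of index $k$ supported in $\Omega \setminus B_{3\delta/4}(0)$ and a truncated extremal $w_{\eps,\delta/2\theta}$ supported in $\closure{B_{\delta/2}(0)}$, and then must prove
\[
\sup_{v \in C,\ \sigma,\tau \ge 0} E_0(\sigma v + \tau w_{\eps,\delta/2\theta}) < \frac{s}{N}\, S^{N/sp}.
\]
This two-parameter supremum is the technical heart of the proof and is not reducible to a ray estimate: because the Gagliardo seminorm is nonlocal, $v$ and $w_{\eps,\delta/2\theta}$ interact even with disjoint supports, and one has to show (Lemma~\ref{Lemma 2.12}) that the resulting cross terms are of order $\delta^{N-sp}$ and $(\eps/\delta)^{(N-sp)/(p-1)}$ and can be absorbed first by the gap $\lambda - \lambda_k$ and then by the $-\lambda\pnorm{w}^p$ term. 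Your passing mention of a ``linking structure'' does not touch this estimate, which is where most of the work lies.

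There is also a smaller inaccuracy in (iii): the Palais--Smale threshold for $E$ is not $\frac{s}{N} S^{N/sp}$ but $\frac{s}{N} S^{N/sp} - \kappa\mu$. In the concentration-compactness bookkeeping one arrives at
\[
c = \frac{s}{N}\norm{\widetilde u_j}^p + \frac{s}{N}\pnorm[p_s^\ast]{u}^{p_s^\ast} - \mu\Big(\frac{1}{q}-\frac{1}{p}\Big)[u]_{t,q}^q + \o(1),
\]
and the $\mu$-term enters with the wrong sign, raising the bound on the concentrating part by $\O(\mu)$. This is in fact the origin of the constant $\kappa$ in the theorem; your alternative derivation of $\kappa$ via a Taylor expansion of the minimax value in (ii) produces an unrelated constant and does not, by itself, guarantee that the level falls below the actual compactness threshold. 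The paper avoids this mismatch by first fixing $\eps,\delta$ so that the $\mu$-independent estimate above is \emph{strict}, and then taking $\mu$ small enough that $\kappa\mu$ is smaller than the resulting fixed gap.
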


When $0 < \lambda < \lambda_1$, where $\lambda_1 > 0$ is the first Dirichlet eigenvalue of $(- \Delta)_p^s$ on $\Omega$, this theorem can be proved using a local minimization argument and the mountain pass theorem. However, when $\lambda > \lambda_1$, the functional $E$ no longer has the mountain pass geometry. In this case, our proof will be based on an abstract multiplicity result recently proved in Perera \cite{MR4293883}, and our solutions are neither local minimizers nor of mountain pass type. They are higher critical points in the sense that they have nontrivial higher critical groups.

\begin{remark}
When $\mu = 0$, one nontrivial weak solution $u$ with
\[
0 < E(u) < \frac{s}{N}\, S^{N/sp}
\]
was obtained in Mosconi et al.\! \cite{MR3530213}.
\end{remark}

\begin{remark}
When $s > 0$ is not an integer and $1 \le q < p \le \infty$, $W^{s,\,p}_0(\Omega) \not\subset W^{s,\,q}_0(\Omega)$ (see Mironescu and Sickel \cite{MR3357858}). So when $t = s$, the functional $E$ is not defined on $W^{s,\,p}_0(\Omega)$ and existence of solutions to problem \eqref{2} is an open problem.
\end{remark}

Theorem \ref{Theorem 2} also holds in the limiting case $s = 1$, where we have the mixed local-nonlocal problem
\begin{equation} \label{1}
\left\{\begin{aligned}
- \Delta_p\, u - \mu\, (- \Delta)_q^t\, u & = \lambda\, |u|^{p-2}\, u + |u|^{p^\ast - 2}\, u && \text{in } \Omega\\[10pt]
u & = 0 && \text{in } \R^N \setminus \Omega,
\end{aligned}\right.
\end{equation}
where $0 < t < 1$, $1 < q < p < N$, $\Delta_p\, u = \divg (|\nabla u|^{p-2}\, \nabla u)$ is the $p$-Laplacian of $u$, $p^\ast = Np/(N - p)$ is the critical Sobolev exponent, and $\lambda, \mu > 0$ are parameters. A weak solution of this problem is a function $u \in W^{1,\,p}_0(\Omega)$ satisfying
\begin{multline*}
\int_\Omega |\nabla u|^{p-2}\, \nabla u \cdot \nabla v\, dx - \mu \int_{\R^{2N}} \frac{|u(x) - u(y)|^{q-2}\, (u(x) - u(y))\, (v(x) - v(y))}{|x - y|^{N+tq}}\, dx dy\\[7.5pt]
- \lambda \int_\Omega |u|^{p-2}\, uv\, dx - \int_\Omega |u|^{p^\ast - 2}\, uv\, dx = 0 \quad \forall v \in W^{1,\,p}_0(\Omega),
\end{multline*}
where functions in $W^{1,\,p}_0(\Omega)$ are extended to be zero outside $\Omega$. Weak solutions coincide with critical points of the $C^1$-functional
\begin{multline*}
E(u) = \frac{1}{p} \int_\Omega |\nabla u|^p\, dx - \frac{\mu}{q} \int_{\R^{2N}} \frac{|u(x) - u(y)|^q}{|x - y|^{N+tq}}\, dx dy - \frac{\lambda}{p} \int_\Omega |u|^p\, dx - \frac{1}{p^\ast} \int_\Omega |u|^{p^\ast}\, dx,\\[7.5pt]
u \in W^{1,\,p}_0(\Omega).
\end{multline*}
Let $\sigma(- \Delta_p)$ denote the Dirichlet spectrum of $- \Delta_p$ on $\Omega$, consisting of those $\lambda \in \R$ for which the eigenvalue problem
\[
\left\{\begin{aligned}
- \Delta_p\, u & = \lambda\, |u|^{p-2}\, u && \text{in } \Omega\\[10pt]
u & = 0 && \text{on } \bdry{\Omega}
\end{aligned}\right.
\]
has a nontrivial weak solution, and let
\[
S = \inf_{u \in W^{1,\,p}_0(\Omega) \setminus \set{0}}\, \frac{\dint_\Omega |\nabla u|^p\, dx}{\left(\dint_\Omega |u|^{p^\ast} dx\right)^{p/p^\ast}}
\]
be the best Sobolev constant. We have the following multiplicity result.

\begin{theorem} \label{Theorem 1}
Assume that $0 < t < 1$, $1 < q < p < N$, $N \ge p^2$, and $\lambda \in (0,\infty) \setminus \sigma(- \Delta_p)$. Then $\exists\, \mu_0, \kappa > 0$ such that problem \eqref{1} has two nontrivial weak solutions $u_1$ and $u_2$ with
\[
E(u_1) < 0 < E(u_2) < \frac{1}{N}\, S^{N/p} - \kappa \mu
\]
for all $\mu \in (0,\mu_0)$.
\end{theorem}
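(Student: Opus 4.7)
The plan is to adapt the proof of Theorem~\ref{Theorem 2}, replacing the fractional $p$-Laplacian by the classical $p$-Laplacian. Write $E = E_0 - \mu\, \Phi$, where
\[
E_0(u) = \frac{1}{p}\int_\Omega |\nabla u|^p\, dx - \frac{\lambda}{p}\int_\Omega |u|^p\, dx - \frac{1}{p^\ast}\int_\Omega |u|^{p^\ast}\, dx
\]
is the Brezis--Nirenberg type functional for the critical $p$-Laplacian, and $\Phi(u) = \frac{1}{q}\, [u]_{t,\,q}^q$. Since $\Omega$ is bounded with $t < 1$ and $q < p$, the chain $W^{1,\,p}_0(\Omega) \hookrightarrow W^{t,\,p}_0(\R^N) \hookrightarrow W^{t,\,q}_0(\R^N)$ (fractional inclusion followed by Hölder on a bounded set) gives a continuous embedding of $W^{1,\,p}_0(\Omega)$ into $W^{t,\,q}_0(\R^N)$, which is moreover compact. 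Hence $\Phi$ is of class $C^1$ and weakly sequentially continuous on $W^{1,\,p}_0(\Omega)$, with $\Phi'$ a compact operator.

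\textbf{Compactness and abstract multiplicity.}
By the classical concentration-compactness argument for the critical $p$-Laplacian (cf.~\cite{MR3530213}), $E_0$ satisfies \PS{c} for every $c < \frac{1}{N}\, S^{N/p}$; since $\mu\, \Phi$ is a compact perturbation of lower order ($q < p$), the same holds for $E$. For the multiplicity I would verify the hypotheses of Perera's abstract theorem~\cite{MR4293883}. Writing $\lambda_k < \lambda < \lambda_{k+1}$ for consecutive cohomological eigenvalues of $-\Delta_p$, one exhibits symmetric subsets $A_0, B_0 \subset W^{1,\,p}_0(\Omega) \setminus \set{0}$ with $\Z_2$-cohomological index $i(A_0) = i(B_0) = k$, a radius $r > 0$, and constants $\alpha < 0 < \beta$ such that $\sup_{A_0} E_0 \le \alpha$ and $E_0(u) \ge \beta$ whenever $u \in B_0$ with $\norm{u} = r$. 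These facts for $E_0$ follow from the variational characterization of $\lambda_k$ together with $\lambda \notin \sigma(-\Delta_p)$, and they persist for $E$ when $\mu$ is small since $\mu\, \Phi(u) = \O(\mu\, \norm{u}^q)$ is of strictly lower order in $\norm{u}$. Perera's theorem then delivers two critical values $c_1 < 0 < c_2 < \frac{1}{N}\, S^{N/p}$ and hence two nontrivial critical points $u_1$, $u_2$ with $E(u_1) < 0 < E(u_2)$.

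\textbf{Sharp upper bound and main obstacle.}
The remaining and most delicate task is to establish $c_2 < \frac{1}{N}\, S^{N/p} - \kappa\, \mu$. I would test the min-max class defining $c_2$ against admissible sets built from the Aubin--Talenti extremals $U_\eps$, truncated to $W^{1,\,p}_0(\Omega)$, as in \cite{MR3530213} for the case $\mu = 0$. Under $N \ge p^2$, the classical expansion yields $\sup E_0 \le \frac{1}{N}\, S^{N/p} - C\, \eps^\tau$ along such a family for some $\tau > 0$, while a direct scaling computation of $[U_\eps]_{t,\,q}^q$ provides a positive contribution from the $-\mu\, \Phi$ term. Balancing the two by choosing $\eps = \eps(\mu)$ should produce a net saving that is at least linear in $\mu$. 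The main obstacle I anticipate is precisely this quantitative comparison: tracking the exponents in the asymptotics of the Gagliardo seminorm of the rescaled bubble, handling truncation error, and verifying that, after optimising in $\eps$, the resulting gain really is $\O(\mu)$. Once this estimate is in hand, the abstract framework closes the proof.
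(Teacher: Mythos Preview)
Your main gap is in the Palais--Smale analysis. The claim that ``since $\mu\,\Phi$ is a compact perturbation of lower order, $E$ satisfies \PS{c} for every $c < \frac{1}{N}\,S^{N/p}$'' is not correct, and this is precisely where the constant $\kappa$ in the statement originates. If $(u_j)$ is a \PS{c} sequence with $u_j \rightharpoonup u$ and $\widetilde u_j = u_j - u$, the standard Br\'ezis--Lieb decomposition together with the equation satisfied by the weak limit $u$ gives
\[
c \;=\; \frac{1}{N}\,\norm{\widetilde u_j}^p \;+\; \frac{1}{N}\,\pnorm[p^\ast]{u}^{p^\ast} \;-\; \mu\!\left(\frac{1}{q}-\frac{1}{p}\right)[u]_{t,\,q}^q \;+\; \o(1).
\]
Since $q<p$, the last term is \emph{nonpositive}, so in the dichotomy alternative $\norm{\widetilde u_j}^p \ge S^{N/p}+\o(1)$ one only obtains $c \ge \frac{1}{N}\,S^{N/p} - \kappa\mu$, where $\kappa = \big(\tfrac{1}{q}-\tfrac{1}{p}\big)\sup\{[u]_{t,\,q}^q : \norm{u}\le a_0\}$ and $a_0$ is the a~priori bound on the \PS{} sequence. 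Compactness of $\Phi$ passes to the limit in the term but does not kill it; it lowers the compactness threshold to $c_\mu = \frac{1}{N}\,S^{N/p} - \kappa\mu$. This is exactly what the paper proves in its Lemma~3.1 (for the fractional case, with the same argument carrying over verbatim when $s=1$).

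Once you accept this reduced threshold, the ``delicate task'' you describe disappears: the $-\kappa\mu$ in the conclusion is \emph{not} something to be manufactured from $\Phi(U_\eps)$ by optimising the bubble scale against $\mu$. One only needs the $\mu$-independent estimate
\[
\sup_{v\in C,\ \sigma,\tau\ge 0} E_0(\sigma v + \tau w_0) \;<\; \frac{1}{N}\,S^{N/p},
\]
which follows from the classical Brezis--Nirenberg expansion under $N\ge p^2$ (here the cross term in $|\nabla(\sigma v+\tau w_0)|^p$ even vanishes since $v$ and $w_0$ have disjoint supports, so the argument is simpler than in the nonlocal case). Since $E\le E_0$ and the above gap is fixed, one has $\sup E < c_\mu$ for all small $\mu$, and the abstract theorem then yields directly $E(u_2) < c_\mu = \frac{1}{N}\,S^{N/p} - \kappa\mu$. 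Your proposed balancing of $\eps$ and $\mu$ via $[U_\eps]_{t,\,q}^q$ is neither needed nor obviously sufficient, because the supremum in the min-max is taken over all $\sigma,\tau\ge 0$, including $\sigma=\tau=0$ where $\Phi$ vanishes, so one cannot extract a uniform $\mu$-gain from that term alone.
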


\begin{remark}
When $\mu = 0$, one nontrivial weak solution $u$ with
\[
0 < E(u) < \frac{1}{N}\, S^{N/p}
\]
was obtained in Garc{\'{\i}}a Azorero and Peral Alonso \cite{MR912211}, Egnell \cite{MR956567}, Guedda and V{\'e}ron \cite{MR1009077}, Arioli and Gazzola \cite{MR1741848}, and Degiovanni and Lancelotti \cite{MR2514055}.
\end{remark}

\begin{remark}
When $t = 1$, the noncompactness of the embedding $W^{1,\,p}_0(\Omega) \subset W^{1,\,q}_0(\Omega)$ makes it difficult to verify that the functional $E$ satisfies the \PS{} condition and existence of solutions to problem \eqref{1} is an open problem.
\end{remark}

\begin{remark}
A mixed problem arising in the description of an ecological niche subject to local and nonlocal dispersals was recently considered in Dipierro and Valdinoci \cite{MR4249816}.
\end{remark}

In the next section we will recall the abstract result from Perera \cite{MR4293883} that we will need to prove Theorem \ref{Theorem 2}. In the last section we will give the proof of Theorem \ref{Theorem 2}. Proof of Theorem \ref{Theorem 1} is similar and therefore omitted.

\section{Abstract multiplicity result}

Let $(W,\norm{\, \cdot\,})$ be a uniformly convex Banach space with dual $(W^\ast,\dnorm{\, \cdot\,})$ and duality pairing $\dualp{\cdot}{\cdot}$. Recall that $h \in C(W,W^\ast)$ is a potential operator if there is a functional $H \in C^1(W,\R)$, called a potential for $h$, such that $H' = h$. We consider the nonlinear operator equation
\begin{equation} \label{3}
\Ap[u] - \mu f(u) = \lambda \Bp[u] + g(u)
\end{equation}
in $W^\ast$, where $\Ap, \Bp, f, g \in C(W,W^\ast)$ are potential operators satisfying the following assumptions and $\lambda, \mu > 0$ are parameters:
\begin{enumerate}
\item[$(A_1)$] $\Ap$ is $(p - 1)$-homogeneous and odd for some $p \in (1,\infty)$, i.e., $\Ap[(tu)] = |t|^{p-2}\, t\, \Ap[u]$ for all $u \in W$ and $t \in \R$,
\item[$(A_2)$] $\dualp{\Ap[u]}{v} \le \norm{u}^{p-1} \norm{v}$ for all $u, v \in W$, and equality holds if and only if $\alpha u = \beta v$ for some $\alpha, \beta \ge 0$, not both zero (in particular, $\dualp{\Ap[u]}{u} = \norm{u}^p$ for all $u \in W$),
\item[$(B_1)$] $\Bp$ is $(p - 1)$-homogeneous and odd, i.e., $\Bp[(tu)] = |t|^{p-2}\, t\, \Bp[u]$ for all $u \in W$ and $t \in \R$,
\item[$(B_2)$] $\dualp{\Bp[u]}{u} > 0$ for all $u \in W \setminus \set{0}$, and $\dualp{\Bp[u]}{v} \le \dualp{\Bp[u]}{u}^{(p-1)/p} \dualp{\Bp[v]}{v}^{1/p}$ for all $u, v \in W$,
\item[$(B_3)$] $\Bp$ is a compact operator,
\item[$(F_1)$] the potential $F$ of $f$ with $F(0) = 0$ satisfies $\ds{\lim_{t \to 0}}\, \dfrac{F(tu)}{|t|^p} = + \infty$ uniformly on compact subsets of $W \setminus \set{0}$,
\item[$(F_2)$] $F(u) > 0$ for all $u \in W \setminus \set{0}$,
\item[$(F_3)$] $F$ is bounded on bounded subsets of $W$,
\item[$(G_1)$] the potential $G$ of $g$ with $G(0) = 0$ satisfies $G(u) = \o(\norm{u}^p)$ as $u \to 0$,
\item[$(G_2)$] $G(u) > 0$ for all $u \in W \setminus \set{0}$,
\item[$(G_3)$] $G$ is bounded on bounded subsets of $W$,
\item[$(G_4)$] $\ds{\lim_{t \to + \infty}} \dfrac{G(tu)}{t^p} = + \infty$ uniformly on compact subsets of $W \setminus \set{0}$.
\end{enumerate}
Solutions of equation \eqref{3} coincide with critical points of the $C^1$-functional
\[
E(u) = I_p(u) - \lambda J_p(u) - \mu F(u) - G(u), \quad u \in W,
\]
where
\[
I_p(u) = \frac{1}{p} \dualp{\Ap[u]}{u} = \frac{1}{p} \norm{u}^p, \qquad J_p(u) = \frac{1}{p} \dualp{\Bp[u]}{u}
\]
are the potentials of $\Ap$ and $\Bp$ satisfying $I_p(0) = J_p(0) = 0$, respectively (see \cite[Proposition 3.1]{MR4293883}).

Let $\M = \set{u \in W : I_p(u) = 1}$. Then $\M \subset W \setminus \set{0}$ is a bounded complete symmetric $C^1$-Finsler manifold radially homeomorphic to the unit sphere in $W$, and eigenvalues of the nonlinear eigenvalue problem
\begin{equation} \label{4}
\Ap[u] = \lambda \Bp[u]
\end{equation}
coincide with critical values of the $C^1$-functional
\[
\Psi(u) = \frac{1}{J_p(u)}, \quad u \in \M.
\]
Denote by $\F$ the class of symmetric subsets of $\M$ and by $i(M)$ the $\Z_2$-cohomological index of $M \in \F$ (see Fadell and Rabinowitz \cite{MR0478189}). For $k \in \N$, let $\F_k = \set{M \in \F : i(M) \ge k}$ and set
\[
\lambda_k := \inf_{M \in \F_k}\, \sup_{u \in M}\, \Psi(u).
\]
Then $\lambda_1 = \inf \Psi(\M) > 0$ is the first eigenvalue of problem \eqref{4} and $\lambda_1 \le \lambda_2 \le \cdots$ is an unbounded sequence of eigenvalues (see Perera \cite{MR1998432}). Moreover, denoting by $\Psi^a = \set{u \in \M : \Psi(u) \le a}$ (resp. $\Psi_a = \set{u \in \M : \Psi(u) \ge a}$) the sublevel (resp. superlevel) sets of $\Psi$, we have
\[
\lambda_k < \lambda_{k+1} \implies i(\Psi^{\lambda_k}) = i(\M \setminus \Psi_{\lambda_{k+1}}) = k
\]
(see Perera et al.\! \cite[Theorem 4.6]{MR2640827}). The following multiplicity result was proved in Perera \cite{MR4293883}.

\begin{theorem}[{\cite[Theorem 1.7]{MR4293883}}] \label{Theorem 3}
Assume that $(A_1)$--$(G_4)$ hold, $\exists\, c_\mu > 0$ such that $E$ satisfies the {\em \PS{c}} condition for all $c < c_\mu$, $\lambda_k \le \lambda < \lambda_{k+1}$, and there exist a compact symmetric subset $C$ of $\Psi^\lambda$ with $i(C) = k$ and $w_0 \in \M \setminus C$ such that
\begin{equation} \label{13}
\sup_{v \in C,\, \sigma, \tau \ge 0}\, E(\sigma v + \tau w_0) < c_\mu
\end{equation}
for all sufficiently small $\mu > 0$. Then $\exists\, \mu_0 > 0$ such that equation \eqref{3} has two nontrivial solutions $u_1$ and $u_2$ with
\[
E(u_1) < 0 < E(u_2) < c_\mu
\]
for all $\mu \in (0,\mu_0)$.
\end{theorem}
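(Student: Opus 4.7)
The plan is to produce the two critical points by separate variational arguments: $u_1$ as a local minimizer of $E$ near the origin, using the behavior forced by $(F_1)$, and $u_2$ via a cohomological-index linking minimax built from $C$ and $w_0$.

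For $u_1$, I would first use the $(p-1)$-homogeneity of $\Ap$ and $\Bp$ together with $(F_1)$ and $(G_1)$ to see that, for every fixed $u_0 \in W \setminus \set{0}$,
\[
\frac{E(tu_0)}{t^p} = \frac{1}{p} - \lambda J_p(u_0) - \mu\, \frac{F(tu_0)}{t^p} - \frac{G(tu_0)}{t^p} \longrightarrow -\infty \quad \text{as } t \to 0^+,
\]
so $E(tu_0) < 0$ for all small $t > 0$. Hence $c_1 := \inf_{\closure{B_\rho}} E < 0$ for every $\rho > 0$, while continuity of $E$ on the bounded set $\closure{B_\rho}$ yields $c_1 > -\infty$. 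Ekeland's variational principle on $\closure{B_\rho}$, with $\rho > 0$ chosen small enough that almost-minimizers stay inside the open ball, produces a Palais--Smale sequence at level $c_1$; since $c_1 < 0 < c_\mu$, the \PS{c_1} condition extracts a strongly convergent subsequence whose limit $u_1$ satisfies $E(u_1) = c_1 < 0$ and $E'(u_1) = 0$, and $u_1 \neq 0$ because $E(0) = 0 > c_1$.

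For $u_2$, I would define the minimax as follows. On the truncated cone
$Q := \set{\sigma v + \tau w_0 : v \in C,\, \sigma, \tau \ge 0,\, \sigma^p + \tau^p \le R^p}$
with a fixed large $R > 0$, hypothesis \eqref{13} bounds $\sup_Q E$ strictly below $c_\mu$ for all small $\mu$; moreover the $(p-1)$-homogeneity of $\Ap, \Bp$, $I_p(v) = 1$ on $\M$, $\lambda J_p(v) \ge 1$ on $\Psi^\lambda$, $(F_2)$, and $(G_2)$ give
\[
E(\sigma v) = \sigma^p\, (1 - \lambda J_p(v)) - \mu F(\sigma v) - G(\sigma v) \le 0 \quad \forall\, v \in C,\, \sigma \ge 0,
\]
so $E \le 0$ on the face $Q \cap \set{\tau = 0}$. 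Let $\Gamma$ denote the class of continuous maps $\gamma : Q \to W$ coinciding with the identity on that face, and set
\[
c_2 := \inf_{\gamma \in \Gamma}\; \sup_{u \in \gamma(Q)} E(u).
\]
Testing with $\gamma = \mathrm{id}$ gives $c_2 \le \sup_Q E < c_\mu$. To produce the lower bound $c_2 > 0$, choose $\rho_0 > 0$ small and consider the ``dual'' set $L := \set{\rho_0\, v : v \in \M,\, \Psi(v) \ge \lambda_{k+1}}$; on $L$ the spectral gap $\lambda_{k+1} - \lambda > 0$ together with $(G_1)$, $(G_3)$, $(F_3)$, and the small-$\mu$ regime force $E \ge \alpha$ for a uniform $\alpha > 0$. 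The identity $i(\M \setminus \Psi_{\lambda_{k+1}}) = k$ from \cite[Theorem 4.6]{MR2640827}, combined with $i(C) = k$, $\lambda < \lambda_{k+1}$, and the extra direction $w_0$, forces every $\gamma(Q)$ with $\gamma \in \Gamma$ to meet $L$, hence $c_2 \ge \alpha > 0$. The \PS{c_2} condition then delivers a critical point $u_2$ at the level $c_2 \in (0, c_\mu)$ via the usual deformation lemma.

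The principal obstacle is the intersection claim $\gamma(Q) \cap L \neq \emptyset$ for every $\gamma \in \Gamma$, which requires a careful application of the Fadell--Rabinowitz cohomological index: one has to realize the radial scaling $W \setminus \set{0} \to \M \times (0,\infty)$ as a homeomorphism, transfer the linking from $\M$ into $W$, and use monotonicity and subadditivity of $i$ to rule out $\gamma(Q) \subset W \setminus L$. Once this topological separation is established, the two critical values $c_1 < 0 < c_2 < c_\mu$ are in hand and the Palais--Smale hypothesis produces $u_1$ and $u_2$.
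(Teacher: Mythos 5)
First, note that the paper you are working from does not prove Theorem \ref{Theorem 3} at all: it is quoted verbatim from \cite[Theorem 1.7]{MR4293883}, so the only question is whether your argument would actually prove it, and it has two genuine gaps. The first is the construction of $u_1$ by local minimization. In the relevant case $k \ge 1$, i.e.\ $\lambda \ge \lambda_1$, the origin is not a local minimizer and $E$ has no small-ball minimization geometry: since $\lambda > \lambda_1 = \inf_\M \Psi$, there is $v \in \M$ with $\lambda J_p(v) > 1$, and then $E(\sigma v) \le \sigma^p\bigl(1 - \lambda J_p(v)\bigr) < 0$ decreases like $-c\,\sigma^p$ along the whole ray, so minimizing sequences for $E$ on $\closure{B_\rho}$ are pushed to the sphere $\bdry{B_\rho}$ for every $\rho$, no matter how small. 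Your assumption that ``almost-minimizers stay inside the open ball'' is exactly what fails, and Ekeland's principle on the closed ball then yields only the one-sided inequality $\dualp{E'(u_\eps)}{w} \ge -\eps \norm{w}$ in inward directions, not a Palais--Smale sequence for $E$ on $W$. This is not a presentational issue: the host paper stresses that for $\lambda > \lambda_1$ the two solutions produced by this theorem ``are neither local minimizers nor of mountain pass type''; in \cite{MR4293883} the negative-energy solution comes from a symmetric, index-$k$ minimax at a negative level (which is why it carries a nontrivial higher critical group), not from minimization near $0$.

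The second gap is in the minimax class for $u_2$. If $\Gamma$ only requires $\gamma$ to be the identity on the face $\set{\tau = 0}$ of $Q$, then $\gamma(\sigma v + \tau w_0) := \sigma v$ is admissible; its image is contained in the cone over $C \subset \Psi^\lambda$, which is disjoint from $L = \set{\rho_0 v : \Psi(v) \ge \lambda_{k+1}}$ because $\lambda < \lambda_{k+1}$, and on this image $E \le 0$ by the computation you yourself give. Hence with your definition the intersection property fails and $c_2 \le 0$, and the whole positive-level argument collapses. To repair it you must pin $\gamma$ on the full relevant boundary of $Q$, which first requires showing $E \le 0$ on the outer cap $\sigma^p + \tau^p = R^p$ for $R$ large (this uses $(G_4)$ uniformly on the compact set of directions generated by $C$ and $w_0$, and is not addressed), and then the intersection statement $\gamma(Q) \cap L \neq \emptyset$ --- which you explicitly defer as ``the principal obstacle'' --- is precisely the index-theoretic core of the theorem: it needs $i(C) = k$, $i(\M \setminus \Psi_{\lambda_{k+1}}) = k$, the extra direction $w_0$, and the monotonicity/subadditivity/piercing properties of the cohomological index, and cannot simply be asserted. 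As written, neither critical point is actually produced, so the proposal does not yet constitute a proof.
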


In the next section we will apply Theorem \ref{Theorem 3} to prove Theorem \ref{Theorem 2}.

\section{Proof of Theorem \ref{Theorem 2}}

We apply Theorem \ref{Theorem 3} with $W = W^{s,\,p}_0(\Omega)$ and the operators $\Ap, \Bp, f, g \in C(W^{s,\,p}_0(\Omega),\linebreak W^{-s,\,p'}(\Omega))$ given by
\begin{multline*}
\dualp{\Ap[u]}{v} = \int_{\R^{2N}} \frac{|u(x) - u(y)|^{p-2}\, (u(x) - u(y))\, (v(x) - v(y))}{|x - y|^{N+sp}}\, dx dy,\\[5pt]
\dualp{\Bp[u]}{v} = \int_\Omega |u|^{p-2}\, uv\, dx,\\[5pt]
\dualp{f(u)}{v} = \int_{\R^{2N}} \frac{|u(x) - u(y)|^{q-2}\, (u(x) - u(y))\, (v(x) - v(y))}{|x - y|^{N+tq}}\, dx dy,\\[5pt]
\dualp{g(u)}{v} = \int_\Omega |u|^{p_s^\ast - 2}\, uv\, dx, \quad u, v \in W^{s,\,p}_0(\Omega).
\end{multline*}
It is easily seen that $(A_1)$--$(G_4)$ hold.

First we determine a threshold level below which the functional $E$ satisfies the \PS{} condition.

\begin{lemma} \label{Lemma 1}
Let $\mu \in (0,1)$. Then $\exists\, \kappa > 0$ such that $E$ satisfies the {\em \PS{c}} condition for all
\begin{equation} \label{5}
c < \frac{s}{N}\, S^{N/sp} - \kappa \mu.
\end{equation}
\end{lemma}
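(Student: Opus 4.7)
The plan is to take a Palais--Smale sequence $(u_n) \subset W^{s,p}_0(\Omega)$ at level $c$ and show that, whenever $c$ lies below the threshold in \eqref{5}, $(u_n)$ has a strongly convergent subsequence. I follow the standard three-step template for critical problems: boundedness, identification of the weak limit $u$ as a critical point of $E$, and exclusion of the noncompact alternative via a Brezis--Lieb decomposition, the Sobolev inequality, and a lower bound on $E(u)$.

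For boundedness I argue by contradiction. Assume $\norm{u_n} \to \infty$ and set $v_n = u_n/\norm{u_n}$. The compact embeddings of $W^{s,p}_0(\Omega)$ into $L^p(\Omega)$ and into $W^{t,q}_0(\Omega)$ (valid since $0 < t < s$ and $1 < q < p$) give, along a subsequence, $v_n \rightharpoonup v$ in $W^{s,p}_0(\Omega)$ with strong convergence in those two spaces and a.e. Dividing both $E(u_n) = c + \o(1)$ and $\langle E'(u_n), u_n\rangle = \o(\norm{u_n})$ by $\norm{u_n}^p$ and passing to the limit (the $\mu$-terms vanish because $\norm{u_n}^{q-p} \to 0$) yields the two limit relations $1 - \lambda \pnorm{v}^p = L$ and $\frac{1}{p}(1 - \lambda \pnorm{v}^p) = L/p_s^\ast$, where $L := \lim \norm{u_n}^{p_s^\ast - p}\, \left|v_n\right|_{p_s^\ast}^{p_s^\ast} \geq 0$. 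Since $p < p_s^\ast$, these force $L = 0$ and $\lambda \pnorm{v}^p = 1$; but $L = 0$ combined with $\norm{u_n}^{p_s^\ast - p} \to \infty$ forces $\left|v_n\right|_{p_s^\ast} \to 0$, hence $v \equiv 0$ a.e., contradicting $\pnorm{v}^p = 1/\lambda > 0$. A standard diagonal extraction (the argument uses only $E(u_n) = \O(1)$ and $E'(u_n) \to 0$) then upgrades this to a \emph{uniform} bound $\norm{u_n} \leq M$ for every PS sequence at any level $c \leq \frac{s}{N}\, S^{N/sp}$, with $M$ depending only on the structural constants.

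With $(u_n)$ bounded, extract $u_n \rightharpoonup u$ in $W^{s,p}_0(\Omega)$, with strong convergence in $L^p(\Omega)$, in $W^{t,q}_0(\Omega)$, and a.e. A standard a.e.-convergence argument for the fractional $p$-Laplacian identifies $u$ as a critical point of $E$. The Brezis--Lieb lemma, applied both to the Gagliardo seminorm and to the $L^{p_s^\ast}$-norm, gives $\norm{u_n}^p = \norm{u}^p + \norm{u_n - u}^p + \o(1)$ and $\left|u_n\right|_{p_s^\ast}^{p_s^\ast} = \left|u\right|_{p_s^\ast}^{p_s^\ast} + \left|u_n - u\right|_{p_s^\ast}^{p_s^\ast} + \o(1)$, while compactness gives $[u_n]_{t,q}^q \to [u]_{t,q}^q$ and $\pnorm{u_n}^p \to \pnorm{u}^p$. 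Subtracting $\langle E'(u), u\rangle = 0$ from $\langle E'(u_n), u_n\rangle = \o(1)$ produces $\norm{u_n - u}^p - \left|u_n - u\right|_{p_s^\ast}^{p_s^\ast} \to 0$, so setting $b := \lim \norm{u_n - u}^p$, the Sobolev inequality $S\, \left|u_n - u\right|_{p_s^\ast}^p \leq \norm{u_n - u}^p$ forces either $b = 0$ (whence strong convergence and PS is proved) or $b \geq S^{N/sp}$. In the second case the decomposed energy reads $c = E(u) + \frac{s}{N}\, b \geq E(u) + \frac{s}{N}\, S^{N/sp}$.

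It remains to bound $E(u)$ from below by $-\kappa\mu$ with $\kappa$ independent of $\mu$, $c$, and the sequence. Substituting $\norm{u}^p = \mu [u]_{t,q}^q + \lambda\pnorm{u}^p + \left|u\right|_{p_s^\ast}^{p_s^\ast}$ from $\langle E'(u), u\rangle = 0$ into $E(u)$ yields
\[
E(u) = -\mu\, \frac{p-q}{pq}\, [u]_{t,q}^q + \frac{s}{N}\, \left|u\right|_{p_s^\ast}^{p_s^\ast} \geq -\mu\, \frac{p-q}{pq}\, [u]_{t,q}^q.
\]
The continuous embedding $W^{s,p}_0(\Omega) \hookrightarrow W^{t,q}_0(\Omega)$ with constant $C_q$ then gives $[u]_{t,q}^q \leq C_q^q\, \norm{u}^q \leq C_q^q\, M^q$, so setting $\kappa := (p-q)\, C_q^q\, M^q/(pq)$ produces $E(u) \geq -\kappa\mu$. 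The resulting $c \geq \frac{s}{N}\, S^{N/sp} - \kappa\mu$ contradicts \eqref{5}, so the noncompact alternative is ruled out and $u_n \to u$ strongly. I expect the main technical obstacle to be the uniform boundedness in Step~1: the pointwise contradiction is routine, but upgrading it to a single constant $M$ valid for all PS sequences at levels $\leq \frac{s}{N}\, S^{N/sp}$, and insensitive to whether $\lambda$ lies above or below the first Dirichlet eigenvalue of $(-\Delta)_p^s$ (where a Poincar\'e-type estimate no longer absorbs $\lambda\pnorm{u}^p$ into $\norm{u}^p$), is the delicate point that the diagonal argument above is designed to address.
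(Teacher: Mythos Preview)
Your proof is correct and follows the same overall template as the paper: boundedness, weak limit, Br\'ezis--Lieb decomposition, Sobolev dichotomy, and a lower bound on the energy of the weak limit giving the contradiction with \eqref{5}. The only real difference is in how you obtain the \emph{uniform} bound on the PS sequence.

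You argue by contradiction (rescale by $\norm{u_n}$, pass to the limit, reach an inconsistency) and then invoke a diagonal extraction to upgrade this to a bound $M$ independent of $\mu\in(0,1)$ and of the particular PS sequence; as you yourself flag, this uniformity is the delicate point, and making the diagonal step precise requires letting $\mu$ vary along the diagonal sequence and checking that the $\mu$-terms still vanish after rescaling (they do, since $\mu<1$, $q<p$, and $[v_n]_{t,q}$ is bounded). The paper bypasses this entirely with a one-line algebraic trick: fix any $r\in(p,p_s^\ast)$ and form $E(u_j)-\tfrac{1}{r}\langle E'(u_j),u_j\rangle$ to obtain
\[
\Big(\tfrac{1}{p}-\tfrac{1}{r}\Big)\norm{u_j}^p+\Big(\tfrac{1}{r}-\tfrac{1}{p_s^\ast}\Big)\pnorm[p_s^\ast]{u_j}^{p_s^\ast}
= \mu\Big(\tfrac{1}{q}-\tfrac{1}{r}\Big)[u_j]_{t,q}^q+\lambda\Big(\tfrac{1}{p}-\tfrac{1}{r}\Big)\pnorm{u_j}^p+c+\o(1)+\o(\norm{u_j}),
\]
from which (using $\mu<1$, the embedding $[u_j]_{t,q}\le C\norm{u_j}$ with $q<p$, and Young's inequality on $\pnorm{u_j}^p$ versus $\pnorm[p_s^\ast]{u_j}^{p_s^\ast}$) an explicit bound $\norm{u_j}\le a_0$ independent of $\mu\in(0,1)$ drops out directly, with no case distinction on the position of $\lambda$ relative to $\lambda_1$. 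This is shorter and avoids the diagonal argument altogether.

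The remainder of your argument---the identity $E(u)=-\mu\,\tfrac{p-q}{pq}\,[u]_{t,q}^q+\tfrac{s}{N}\,\pnorm[p_s^\ast]{u}^{p_s^\ast}$ coming from $\langle E'(u),u\rangle=0$, and the resulting bound $E(u)\ge -\kappa\mu$ via $[u]_{t,q}^q\le C_q^q M^q$---is exactly the computation the paper carries out (it writes it as $c=\tfrac{s}{N}\norm{\widetilde u_j}^p+\tfrac{s}{N}\pnorm[p_s^\ast]{u}^{p_s^\ast}-\mu(\tfrac{1}{q}-\tfrac{1}{p})[u]_{t,q}^q+\o(1)$ and then drops the nonnegative middle term).
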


\begin{proof}
Let $c \in \R$ and let $\seq{u_j}$ be a sequence in $W^{s,\,p}_0(\Omega)$ such that
\begin{multline} \label{6}
E(u_j) = \frac{1}{p} \int_{\R^{2N}} \frac{|u_j(x) - u_j(y)|^p}{|x - y|^{N+sp}}\, dx dy - \frac{\mu}{q} \int_{\R^{2N}} \frac{|u_j(x) - u_j(y)|^q}{|x - y|^{N+tq}}\, dx dy - \frac{\lambda}{p} \int_\Omega |u_j|^p\, dx\\[7.5pt]
- \frac{1}{p_s^\ast} \int_\Omega |u_j|^{p_s^\ast}\, dx = c + \o(1)
\end{multline}
and
\begin{multline} \label{7}
\dualp{E'(u_j)}{v} = \int_{\R^{2N}} \frac{|u_j(x) - u_j(y)|^{p-2}\, (u_j(x) - u_j(y))\, (v(x) - v(y))}{|x - y|^{N+sp}}\, dx dy\\[7.5pt]
- \mu \int_{\R^{2N}} \frac{|u_j(x) - u_j(y)|^{q-2}\, (u_j(x) - u_j(y))\, (v(x) - v(y))}{|x - y|^{N+tq}}\, dx dy - \lambda \int_\Omega |u_j|^{p-2}\, uv\, dx\\[7.5pt]
- \int_\Omega |u_j|^{p_s^\ast - 2}\, uv\, dx = \o(\norm{v}) \quad \forall v \in W^{s,\,p}_0(\Omega).
\end{multline}
Taking $v = u_j$ in \eqref{7} gives
\begin{multline} \label{8}
\int_{\R^{2N}} \frac{|u_j(x) - u_j(y)|^p}{|x - y|^{N+sp}}\, dx dy - \mu \int_{\R^{2N}} \frac{|u_j(x) - u_j(y)|^q}{|x - y|^{N+tq}}\, dx dy - \lambda \int_\Omega |u_j|^p\, dx\\[7.5pt]
- \int_\Omega |u_j|^{p_s^\ast}\, dx = \o(\norm{u_j}).
\end{multline}
Fix $r \in (p,p_s^\ast)$. Dividing \eqref{8} by $r$ and subtracting from \eqref{6} gives
\[
\left(\frac{1}{p} - \frac{1}{r}\right) \norm{u_j}^p - \mu \left(\frac{1}{q} - \frac{1}{r}\right) [u_j]_{t,\,q}^q - \lambda \left(\frac{1}{p} - \frac{1}{r}\right) \pnorm{u_j}^p + \left(\frac{1}{r} - \frac{1}{p_s^\ast}\right) \pnorm[p_s^\ast]{u_j}^{p_s^\ast} = c + \o(1) + \o(\norm{u_j}).
\]
Since $t < s$ and $q < p < r < p_s^\ast$, this implies that $\exists\, a_0 > 0$, independent of $\mu \in (0,1)$, such that $\norm{u_j} \le a_0$. So a renamed subsequence of $\seq{u_j}$ converges to some $u$ weakly in $W^{s,\,p}_0(\Omega)$, strongly in $W^{t,\,q}(\Omega)$ and in $L^p(\Omega)$, and a.e.\! in $\Omega$. Moreover,
\[
\norm{u} \le \liminf_{j \to \infty}\, \norm{u_j} \le a_0.
\]
Setting $\widetilde{u}_j = u_j - u$, we will show that $\widetilde{u}_j \to 0$ in $W^{s,\,p}_0(\Omega)$.

Equation \eqref{8} implies
\begin{equation} \label{9}
\norm{u_j}^p = \pnorm[p_s^\ast]{u_j}^{p_s^\ast} + \mu\, [u]_{t,\,q}^q + \lambda \pnorm{u}^p + \o(1),
\end{equation}
where $\pnorm[p_s^\ast]{\cdot}$ denotes the $L^{p_s^\ast}\!(\Omega)$-norm. Taking $v = u$ in \eqref{7} and passing to the limit gives
\begin{equation} \label{10}
\norm{u}^p = \pnorm[p_s^\ast]{u}^{p_s^\ast} + \mu\, [u]_{t,\,q}^q + \lambda \pnorm{u}^p.
\end{equation}
Since
\begin{equation} \label{11}
\norm{\widetilde{u}_j}^p = \norm{u_j}^p - \norm{u}^p + \o(1)
\end{equation}
and
\[
\pnorm[p_s^\ast]{\widetilde{u}_j}^{p_s^\ast} = \pnorm[p_s^\ast]{u_j}^{p_s^\ast} - \pnorm[p_s^\ast]{u}^{p_s^\ast} + \o(1)
\]
by the Br{\'e}zis-Lieb lemma \cite[Theorem 1]{MR699419}, \eqref{9} and \eqref{10} imply
\[
\norm{\widetilde{u}_j}^p = \pnorm[p_s^\ast]{\widetilde{u}_j}^{p_s^\ast} + \o(1) \le \frac{\norm{\widetilde{u}_j}^{p_s^\ast}}{S^{p_s^\ast/p}} + \o(1),
\]
so
\begin{equation} \label{12}
\norm{\widetilde{u}_j}^p \left(S^{N/(N-sp)} - \norm{\widetilde{u}_j}^{sp^2/(N-sp)}\right) \le \o(1).
\end{equation}
On the other hand, \eqref{6} implies
\[
c = \frac{1}{p} \norm{u_j}^p - \frac{1}{p_s^\ast} \pnorm[p_s^\ast]{u_j}^{p_s^\ast} - \frac{\mu}{q}\, [u]_{t,\,q}^q - \frac{\lambda}{p} \pnorm{u}^p + \o(1),
\]
and a straightforward calculation combining this with \eqref{9}--\eqref{11} gives
\[
c = \frac{s}{N} \norm{\widetilde{u}_j}^p + \frac{s}{N} \pnorm[p_s^\ast]{u}^{p_s^\ast} - \mu \left(\frac{1}{q} - \frac{1}{p}\right) [u]_{t,\,q}^q + \o(1).
\]
Since $\norm{u} \le a_0$, this gives
\[
\norm{\widetilde{u}_j}^p \le \frac{N}{s}\, (c + \kappa \mu) + \o(1)
\]
for some constant $\kappa > 0$. Combining this with \eqref{12} shows that $\widetilde{u}_j \to 0$ when \eqref{5} holds.
\end{proof}

As we have noted in the introduction, the result follows from a local minimization argument and the mountain pass theorem when $0 < \lambda < \lambda_1$, so suppose $\lambda > \lambda_1$. Since $\lambda \notin \sigma((- \Delta)_p^s)$, then $\lambda_k < \lambda < \lambda_{k+1}$ for some $k \ge 1$. We apply Theorem \ref{Theorem 3} with
\[
c_\mu = \frac{s}{N}\, S^{N/sp} - \kappa \mu,
\]
where $\kappa > 0$ is as in Lemma \ref{Lemma 1}. We have $\M = \set{u \in W^{s,\,p}_0(\Omega) : \norm{u}^p = p}$ and $\Psi(u) = p/\int_\Omega |u|^p\, dx$ for $u \in \M$. Let
\[
E_0(u) = \frac{1}{p} \int_{\R^{2N}} \frac{|u(x) - u(y)|^p}{|x - y|^{N+sp}}\, dx dy - \frac{\lambda}{p} \int_\Omega |u|^p\, dx - \frac{1}{p_s^\ast} \int_\Omega |u|^{p_s^\ast}\, dx, \quad u \in W^{s,\,p}_0(\Omega).
\]
We will show that there exist a compact symmetric subset $C$ of $\Psi^\lambda$ with $i(C) = k$ and $w_0 \in \M \setminus C$ such that
\begin{equation} \label{2.58}
\sup_{v \in C,\, \sigma, \tau \ge 0}\, E_0(\sigma v + \tau w_0) < \frac{s}{N}\, S^{N/sp}.
\end{equation}
Since $E(u) \le E_0(u)$ for all $u \in W^{s,\,p}_0(\Omega)$, then \eqref{13} will hold if $\mu > 0$ is sufficiently small, so the desired conclusion will follow.

We may assume without loss of generality that $0 \in \Omega$. Let $\delta_0 = \dist{0}{\bdry{\Omega}}$. Since $\lambda_k < \lambda_{k+1}$, $\Psi^{\lambda_k}$ has a compact symmetric subset $C_0$ of index $k$ that is bounded in $L^\infty(\Omega)$ (see Mosconi et al.\! \cite[Proposition 3.1]{MR3530213}). Let $\eta : [0,\infty) \to [0,1]$ be a smooth function such that $\eta(t) = 0$ for $t \le 3/4$ and $\eta(t) = 1$ for $t \ge 1$, let
\[
u_\delta(x) = \eta\bigg(\frac{|x|}{\delta}\bigg)\, u(x), \quad u \in C_0,\, 0 < \delta \le \delta_0/2,
\]
and set
\[
C = \set{\pi_\M(u_\delta) : u \in C_0},
\]
where $\pi_\M : W^{s,\,p}_0(\Omega) \minuszero \to \M,\, u \mapsto p^{1/p}\, u/\norm{u}$ is the radial projection onto $\M$. The following lemma was proved in Mosconi et al.\! \cite{MR3530213}.

\begin{lemma}[Mosconi et al.\! {\cite[Proposition 3.2]{MR3530213}}] \label{Lemma 2.10}
The set $C$ is a compact symmetric subset of $\Psi^{\lambda_k + c_1\, \delta^{N-sp}}$ for some constant $c_1 > 0$ and is bounded in $L^\infty(\Omega)$. If $\lambda_k + c_1\, \delta^{N-sp} < \lambda_{k+1}$, then $i(C) = k$.
\end{lemma}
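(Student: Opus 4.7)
The lemma packages four claims: compactness and symmetry of $C$, the sublevel inclusion $C \subset \Psi^{\lambda_k + c_1 \delta^{N-sp}}$, the $L^\infty$ bound, and the index identity. My plan is to prove these in order, with the sublevel inclusion carrying essentially all the technical weight.

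First I would set $T(u) := \pi_\M(u_\delta)$ and check that $T$ is an odd continuous map from $C_0$ to $\M$. The multiplication-by-cutoff map $u \mapsto u_\delta$ is odd and continuous from $W^{s,p}_0(\Omega)$ into itself; well-definedness of $T$ amounts to $u_\delta \neq 0$ uniformly in $u \in C_0$ for all sufficiently small $\delta$. This follows because $\pnorm{u - u_\delta} \le M\, \vol{B_\delta}^{1/p}$ with $M := \sup_{u \in C_0} \pnorm[\infty]{u}$, while $C_0 \subset \Psi^{\lambda_k} \cap \M$ gives the uniform lower bound $\pnorm{u}^p \ge p/\lambda_k$. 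Then $C = T(C_0)$ is the continuous odd image of a compact symmetric set, hence compact and symmetric; and the pointwise inequality $|u_\delta| \le |u|$ together with a uniform positive lower bound on $\norm{u_\delta}$ yields the $L^\infty$ bound on $C$.

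For the sublevel inclusion, $\Psi(\pi_\M(w)) = \norm{w}^p / \pnorm{w}^p$, so it suffices to establish
\[
\norm{u_\delta}^p \le \norm{u}^p + C_1 M^p \delta^{N-sp}, \qquad \pnorm{u_\delta}^p \ge \pnorm{u}^p - C_2 M^p \delta^N,
\]
uniformly in $u \in C_0$. The lower bound is immediate from $|u - u_\delta| \le M \chi_{B_\delta}$. For the seminorm estimate, I would split the Gagliardo double integral for $\norm{u_\delta}^p - \norm{u}^p$, whose integrand vanishes where both $x, y$ lie outside $B_\delta$, into contributions from the off-support region $B_{3\delta/4} \times B_\delta^c$, the transition/cross-boundary region $(B_\delta \setminus B_{3\delta/4}) \times \R^N$, and the fully interior region $B_\delta \times B_\delta$. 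Each piece is controlled by a direct computation that uses $\pnorm[\infty]{u} \le M$, the linear vanishing of $1 - \eta(|\cdot|/\delta)$ at $|x| = \delta$ with Lipschitz constant of order $1/\delta$, and the scaling identity $[\eta(|\cdot|/\delta)]_{s,p}^p = \delta^{N-sp}\,[\eta(|\cdot|)]_{s,p}^p$. Combining both estimates, using $\Psi(u) \le \lambda_k$ and $\delta^N \le \delta^{N-sp}$ for small $\delta$ (since $sp > 0$), yields $\Psi(\pi_\M(u_\delta)) \le \lambda_k + c_1\delta^{N-sp}$.

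The index identity reduces to two monotonicity applications of the Fadell--Rabinowitz index: the odd continuous surjection $T : C_0 \to C$ gives $i(C) \ge i(C_0) = k$, while under the hypothesis $\lambda_k + c_1\delta^{N-sp} < \lambda_{k+1}$ the inclusion $C \subset \Psi^{\lambda_k + c_1\delta^{N-sp}} \subset \M \setminus \Psi_{\lambda_{k+1}}$, combined with the identity $i(\M \setminus \Psi_{\lambda_{k+1}}) = k$ recalled in Section 2, gives $i(C) \le k$. The main obstacle is the seminorm estimate $\norm{u_\delta}^p - \norm{u}^p = \text{O}(\delta^{N-sp})$: the naive triangle inequality $\norm{u_\delta} \le \norm{u} + \norm{u - u_\delta}$ only yields the weaker rate $\delta^{(N-sp)/p}$, so one must instead perform a direct pointwise expansion of $|u_\delta(x)-u_\delta(y)|^p$ around $|u(x)-u(y)|^p$ and integrate region by region, which is where the cancellation between the vanishing of $1-\eta(|\cdot|/\delta)$ at $|x|=\delta$ and the $|x-y|^{-N-sp}$ singularity becomes essential.
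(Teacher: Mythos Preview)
The paper does not supply its own proof of this lemma; it is quoted verbatim from Mosconi, Perera, Squassina, and Yang \cite[Proposition~3.2]{MR3530213} and used as a black box. Your outline is the standard argument for that proposition and correctly isolates the only substantive step---the seminorm perturbation estimate $\norm{u_\delta}^p \le \norm{u}^p + \O(\delta^{N-sp})$ uniformly over $C_0$---as well as the reason the naive triangle inequality falls short; the index argument via monotonicity of the Fadell--Rabinowitz index together with the identity $i(\M \setminus \Psi_{\lambda_{k+1}}) = k$ recalled in Section~2 is exactly how the paper's framework is meant to be invoked.
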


Let $\delta \in (0,\delta_0/2]$ be so small that $\lambda_k + c_1\, \delta^{N-sp} < \lambda$. Then $C$ is a compact symmetric subset of $\Psi^\lambda$ with $i(C) = k$ that is bounded in $L^\infty(\Omega)$ by Lemma \ref{Lemma 2.10}. We will show that $\exists\, w_0 \in \M \setminus C$ with support in $\closure{B_{\delta/2}(0)}$ such that \eqref{2.58} holds.

Recall that there exists a nonnegative, radially symmetric, and decreasing minimizer $U(x) = U(r),\, r = |x|$ for $S$ satisfying
\[
\int_{\R^{2N}} \frac{|U(x) - U(y)|^p}{|x - y|^{N+sp}}\, dx dy = \int_{\R^N} U(x)^{p_s^\ast}\, dx = S^{N/sp}
\]
and
\begin{equation} \label{2.60}
c_2\, r^{-(N-sp)/(p-1)} \le U(r) \le c_3\, r^{-(N-sp)/(p-1)} \quad \forall r \ge 1
\end{equation}
for some constants $c_2, c_3 > 0$ (see Brasco et al.\! \cite{MR3461371}). Then the functions
\[
u_\eps(x) = \eps^{-(N-sp)/p}\, U\bigg(\frac{|x|}{\eps}\bigg), \quad \eps > 0
\]
are also minimizers for $S$ satisfying
\[
\int_{\R^{2N}} \frac{|u_\eps(x) - u_\eps(y)|^p}{|x - y|^{N+sp}}\, dx dy = \int_{\R^N} u_\eps(x)^{p_s^\ast}\, dx = S^{N/sp}
\]
and
\[
\frac{U(\theta r)}{U(r)} \le \frac{c_3}{c_2}\, \theta^{-(N-sp)/(p-1)} \le \half \quad \forall r \ge 1
\]
if $\theta > 1$ is a sufficiently large constant. Let
\[
u_{\eps,\delta}(x) = \begin{cases}
u_\eps(x) & \text{if } |x| \le \delta\\[10pt]
\dfrac{u_\eps(\delta)\, (u_\eps(x) - u_\eps(\theta \delta))}{u_\eps(\delta) - u_\eps(\theta \delta)} & \text{if } \delta < |x| < \theta \delta\\[10pt]
0 & \text{if } |x| \ge \theta \delta
\end{cases}
\]
and
\[
w_{\eps,\delta}(x) = \frac{u_{\eps,\delta}(x)}{\left(\dint_{\R^N} u_{\eps,\delta}(x)^{p_s^\ast}\, dx\right)^{1/p_s^\ast}}
\]
for $0 < \delta \le \delta_0/2$. Then
\begin{equation} \label{2.61}
\int_{\R^N} w_{\eps,\delta}(x)^{p_s^\ast}\, dx = 1
\end{equation}
and for $\eps \le \delta/2$ we have the estimates
\begin{gather}
\label{2.62} \int_{\R^{2N}} \frac{|w_{\eps,\delta}(x) - w_{\eps,\delta}(y)|^p}{|x - y|^{N+sp}}\, dx dy \le S + c_4\, (\eps/\delta)^{(N-sp)/(p-1)},\\[10pt]
\label{2.63} \int_{\R^N} w_{\eps,\delta}^p(x)\, dx \ge \begin{cases}
c_5\, \eps^{sp} & \text{if } N > sp^2\\[10pt]
c_5\, \eps^{sp} \abs{\log\, (\eps/\delta)} & \text{if } N = sp^2
\end{cases}
\end{gather}
for some constants $c_4, c_5 > 0$ (see Mosconi et al.\! \cite[Lemma 2.7]{MR3530213}). Let
\[
w_0 = \pi_\M(w_{\eps,\delta/2 \theta}).
\]
Since functions in $C$ have their supports in $\Omega \setminus B_{3 \delta/4}(0)$, while the support of $w_0$ is in $\closure{B_{\delta/2}(0)}$, $w_0 \in \M \setminus C$. We will show that \eqref{2.58} holds if $\eps, \delta > 0$ are sufficiently small.

Note that \eqref{2.58} is equivalent to
\begin{equation} \label{2.64}
\sup_{v \in C,\, \sigma, \tau \ge 0}\, E_0(\sigma v + \tau w_{\eps,\delta/2 \theta}) < \frac{s}{N}\, S^{N/sp}.
\end{equation}
Since each $v \in C$ and $w_{\eps,\delta/2 \theta}$ have disjoint supports,
\begin{multline} \label{2.65}
E_0(\sigma v + \tau w_{\eps,\delta/2 \theta}) = \frac{1}{p} \int_{\R^{2N}} \frac{|(\sigma v(x) + \tau w_{\eps,\delta/2 \theta}(x)) - (\sigma v(y) + \tau w_{\eps,\delta/2 \theta}(y))|^p}{|x - y|^{N+sp}}\, dx dy\\[10pt]
- \int_\Omega \left(\frac{\lambda \sigma^p}{p}\, |v|^p + \frac{\sigma^{p_s^\ast}}{p_s^\ast}\, |v|^{p_s^\ast}\right) dx - \int_\Omega \left(\frac{\lambda \tau^p}{p}\, w_{\eps,\delta/2 \theta}^p + \frac{\tau^{p_s^\ast}}{p_s^\ast}\, w_{\eps,\delta/2 \theta}^{p_s^\ast}\right) dx.
\end{multline}
Denote by $I$ the first integral on the right-hand side and set $\alpha = N - sp > 0$.

\begin{lemma} \label{Lemma 2.12}
For $v \in C$ and $\sigma, \tau \ge 0$,
\begin{multline*}
I \le \sigma^p \left(\int_{\R^{2N}} \frac{|v(x) - v(y)|^p}{|x - y|^{N+sp}}\, dx dy + p\, c_6\, \delta^\alpha\right) + \tau^p\, \bigg(\int_{\R^{2N}} \frac{|w_{\eps,\delta/2 \theta}(x) - w_{\eps,\delta/2 \theta}(y)|^p}{|x - y|^{N+sp}}\, dx dy\\[10pt]
+ c_7\, (\eps/\delta)^{\alpha/(p-1)}\bigg)
\end{multline*}
for some constants $c_6, c_7 > 0$.
\end{lemma}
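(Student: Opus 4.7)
The crucial geometric fact is that every $v\in C$ is supported in $A:=\supp v\subseteq\Omega\setminus B_{3\delta/4}(0)$, while $w_{\eps,\delta/2\theta}$ is supported in $B:=\supp w_{\eps,\delta/2\theta}\subseteq\overline{B_{\delta/2}(0)}$, so $A\cap B=\emptyset$, and moreover $|x-y|\ge|x|/3$ for every $x\in A$ and $y\in B$ (since then $|y|\le\delta/2\le\frac{2}{3}|x|$). Starting from here, I would split the double integral defining $I$ into the nine pieces $X\times Y$ with $X,Y\in\{A,B,(A\cup B)^c\}$. Using that $\sigma v+\tau w_{\eps,\delta/2\theta}$ equals $\sigma v$ on $A$, $\tau w_{\eps,\delta/2\theta}$ on $B$, and zero on $(A\cup B)^c$, and comparing with the analogous nine-piece decompositions of $\sigma^p[v]_{s,p}^p$ and $\tau^p[w_{\eps,\delta/2\theta}]_{s,p}^p$, a direct computation yields the identity
\[
I=\sigma^p[v]_{s,p}^p+\tau^p[w_{\eps,\delta/2\theta}]_{s,p}^p+2\!\int_{A\times B}\!\frac{|\sigma v(x)-\tau w_{\eps,\delta/2\theta}(y)|^p-\sigma^p|v(x)|^p-\tau^p w_{\eps,\delta/2\theta}(y)^p}{|x-y|^{N+sp}}\,dx\,dy.
\]

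Next I would estimate the cross-term integrand pointwise. Combining the triangle inequality $|\sigma v(x)-\tau w_{\eps,\delta/2\theta}(y)|\le\sigma|v(x)|+\tau w_{\eps,\delta/2\theta}(y)$ with the elementary bound $(a+b)^p-a^p-b^p\le C_p\,(a^{p-1}b+ab^{p-1})$ (valid for $a,b\ge 0$ and $p>1$, as the ratio of the two sides is bounded on the unit circle by homogeneity) reduces matters to controlling
\[
J_1=\int_{A\times B}\frac{|v(x)|^{p-1}w_{\eps,\delta/2\theta}(y)}{|x-y|^{N+sp}}\,dx\,dy,\qquad J_2=\int_{A\times B}\frac{|v(x)|\,w_{\eps,\delta/2\theta}(y)^{p-1}}{|x-y|^{N+sp}}\,dx\,dy.
\]
The $L^\infty$ bound on $v$ from Lemma \ref{Lemma 2.10}, together with the geometric estimate $\int_A dx/|x-y|^{N+sp}\le C\delta^{-sp}$ (uniform in $y\in B$, obtained from $|x-y|\ge|x|/3$) and integrability estimates on $\int w_{\eps,\delta/2\theta}$ and $\int w_{\eps,\delta/2\theta}^{p-1}$ derived from the explicit profile $u_\eps(x)=\eps^{-(N-sp)/p}U(|x|/\eps)$ and the tail decay \eqref{2.60}, give power-type bounds $J_1\lesssim\eps^{\alpha/(p(p-1))}\delta^{(p-2)\alpha/(p-1)}$ and $J_2\lesssim\eps^{\alpha/p}$.

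Finally, a weighted Young inequality splits each mixed monomial $\sigma^{p-1}\tau\,J_1$ and $\sigma\tau^{p-1}J_2$ into a pure $\sigma^p$ piece and a pure $\tau^p$ piece. Choosing the Young weight so that the $\sigma^p$ coefficient is a constant multiple of $\delta^\alpha$, duality of the conjugate exponents $p$ and $p/(p-1)$ then forces the $\tau^p$ coefficient to be a constant multiple of $(\eps/\delta)^{\alpha/(p-1)}$, exactly matching the form claimed in the lemma. The main difficulty is obtaining the precise $\eps$-decay in $J_1$ and $J_2$: a crude $L^\infty$ bound on $w_{\eps,\delta/2\theta}^{p-1}$ would absorb the $\eps$-factor into a constant and yield only an $O(\tau^p)$ contribution, too large to fit into $c_7\tau^p(\eps/\delta)^{\alpha/(p-1)}$, so the fine tail asymptotics of the Sobolev profile $U$ from \eqref{2.60} must be exploited.
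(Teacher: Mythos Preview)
Your approach matches the paper's almost exactly: the same geometric separation of supports, the same reduction to cross-terms via an elementary $p$-th power expansion, the same $L^\infty$ bound on $v$ combined with $|x-y|\ge|x|/3$, and the same Young-inequality splitting at the end. Your exact identity
\[
I=\sigma^p[v]_{s,p}^p+\tau^p[w_{\eps,\delta/2\theta}]_{s,p}^p+2\!\int_{A\times B}\!\frac{|\sigma v(x)-\tau w_{\eps,\delta/2\theta}(y)|^p-\sigma^p|v(x)|^p-\tau^p w_{\eps,\delta/2\theta}(y)^p}{|x-y|^{N+sp}}\,dx\,dy
\]
is in fact slightly cleaner than the paper's version, which bounds $I$ by $\sigma^pI_1+\tau^pI_2+2I_3$ using the three regions $B_{\delta/2}^c\times B_{\delta/2}^c$, $B_{3\delta/4}\times B_{3\delta/4}$, $B_{3\delta/4}^c\times B_{\delta/2}$ and only recombines the pieces at the end.

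There is, however, a genuine gap in your treatment of $J_1$. The claimed bound $J_1\lesssim\eps^{\alpha/(p(p-1))}\delta^{(p-2)\alpha/(p-1)}$ rests on the estimate $\int_{B_{\delta/2\eps}}U\lesssim(\delta/\eps)^{N-\alpha/(p-1)}$, which follows from the tail decay \eqref{2.60} \emph{only} when the exponent $N-\alpha/(p-1)$ is positive, that is, when $p>2N/(N+s)$. In the range $1<p\le 2N/(N+s)$ the integral $\int_{\R^N}U$ is finite (or logarithmic at the borderline), the right-hand side tends to zero as $\delta/\eps\to\infty$, and your power-type bound on $J_1$ is simply false. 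Since Theorem~\ref{Theorem 2} allows $p$ arbitrarily close to $1$, this range is not excluded, and your subsequent Young-inequality splitting of $\sigma^{p-1}\tau J_1$ would not produce the required $(\eps/\delta)^{\alpha/(p-1)}$ coefficient on $\tau^p$.

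The paper handles this by a case split. For $p\ge 2$ one automatically has $p>2N/(N+s)$, and the symmetric inequality $|a+b|^p\le|a|^p+|b|^p+C_p(|a|^{p-1}|b|+|a||b|^{p-1})$ is used exactly as you propose. For $1<p<2$ the paper instead uses the \emph{asymmetric} inequality
\[
|a+b|^p\le|a|^p+|b|^p+p\,|a|\,|b|^{p-1},
\]
which eliminates $J_1$ altogether and leaves only the term $J_{p-1}$ (your $J_2$). Since the condition $p-1<N(p-1)/\alpha$ reduces to $\alpha<N$ and therefore always holds, the estimate for $J_{p-1}$ is valid in the full range $p>1$, and the Young step goes through. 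You should incorporate this case split (or an equivalent device) to close the argument for small $p$.
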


\begin{proof}
Since $\supp v \subset B_{3 \delta/4}^c$ and $\supp w_{\eps,\delta/2 \theta} \subset \closure{B_{\delta/2}}$,
\begin{multline} \label{2.66}
I \le \sigma^p \int_{B_{\delta/2}^c \times B_{\delta/2}^c} \frac{|v(x) - v(y)|^p}{|x - y|^{N+sp}}\, dx dy + \tau^p \int_{B_{3 \delta/4} \times B_{3 \delta/4}} \frac{|w_{\eps,\delta/2 \theta}(x) - w_{\eps,\delta/2 \theta}(y)|^p}{|x - y|^{N+sp}}\, dx dy\\[10pt]
+ 2 \int_{B_{3 \delta/4}^c \times B_{\delta/2}} \frac{|\sigma v(x) - \tau w_{\eps,\delta/2 \theta}(y)|^p}{|x - y|^{N+sp}}\, dx dy =: \sigma^p I_1 + \tau^p I_2 + 2 I_3.
\end{multline}

First suppose $p \ge 2$. To estimate $I_3$, we use the elementary inequality
\[
|a + b|^p \le |a|^p + |b|^p + C_p\, (|a|^{p-1} |b| + |a||b|^{p-1}) \quad \forall a, b \in \R
\]
for some constant $C_p > 0$. Since $v(y) = 0$ for $y \in B_{\delta/2}$ and $w_{\eps,\delta/2 \theta}(x) = 0$ for $x \in B_{3 \delta/4}^c$, we get
\begin{multline} \label{2.67}
I_3 \le \sigma^p \int_{B_{3 \delta/4}^c \times B_{\delta/2}} \frac{|v(x) - v(y)|^p}{|x - y|^{N+sp}}\, dx dy + \tau^p \int_{B_{3 \delta/4}^c \times B_{\delta/2}} \frac{|w_{\eps,\delta/2 \theta}(x) - w_{\eps,\delta/2 \theta}(y)|^p}{|x - y|^{N+sp}}\, dx dy\\[10pt]
\hspace{-22.61pt} + C_p\, \Bigg(\sigma^{p-1} \tau \int_{B_{3 \delta/4}^c \times B_{\delta/2}} \frac{|v(x)|^{p-1}\, w_{\eps,\delta/2 \theta}(y)}{|x - y|^{N+sp}}\, dx dy + \sigma \tau^{p-1} \int_{B_{3 \delta/4}^c \times B_{\delta/2}} \frac{|v(x)|\, w_{\eps,\delta/2 \theta}(y)^{p-1}}{|x - y|^{N+sp}}\, dx dy\Bigg)\\[10pt]
=: \sigma^p I_4 + \tau^p I_5 + C_p \left(\sigma^{p-1} \tau J_1 + \sigma \tau^{p-1} J_{p-1}\right),
\end{multline}
where
\[
J_q = \int_{B_{3 \delta/4}^c \times B_{\delta/2}} \frac{|v(x)|^{p-q}\, w_{\eps,\delta/2 \theta}(y)^q}{|x - y|^{N+sp}}\, dx dy, \quad q = 1, p - 1.
\]
Since $C$ is bounded in $L^\infty(\Omega)$ and
\[
|x - y| \ge |x| - |y| > |x| - \frac{\delta}{2} \ge |x| - \frac{2}{3}\, |x| = \frac{|x|}{3} \quad \forall (x,y) \in B_{3 \delta/4}^c \times B_{\delta/2},
\]
we have
\begin{equation} \label{2.68}
J_q \le c_8 \int_{B_{3 \delta/4}^c \times B_{\delta/2}} \frac{w_{\eps,\delta/2 \theta}(y)^q}{|x|^{N+sp}}\, dx dy = c_9\, \delta^{-sp} \int_{B_{\delta/2}} w_{\eps,\delta/2 \theta}(y)^q\, dy
\end{equation}
for some constants $c_8, c_9 > 0$. By Mosconi et al.\! \cite[Lemma 2.7]{MR3530213}, $|u_{\eps,\delta/2 \theta}|_{p_s^\ast}$ is bounded away from zero and hence
\begin{equation} \label{2.69}
\int_{B_{\delta/2}} w_{\eps,\delta/2 \theta}(y)^q\, dy \le c_{10} \int_{B_{\delta/2}} u_{\eps,\delta/2 \theta}(y)^q\, dy
\end{equation}
for some constant $c_{10} > 0$. Noting that $u_{\eps,\delta/2 \theta} \le u_\eps$, we have
\begin{multline} \label{2.70}
\int_{B_{\delta/2}} u_{\eps,\delta/2 \theta}(y)^q\, dy \le \int_{B_{\delta/2}} u_\eps(y)^q\, dy = \eps^{- \alpha q/p} \int_{B_{\delta/2}} U\bigg(\frac{|y|}{\eps}\bigg)^q\, dy\\[10pt]
= \eps^{N - \alpha q/p} \int_{B_{\delta/2 \eps}} U(|y|)^q\, dy.
\end{multline}
When $q < N(p - 1)/\alpha$, \eqref{2.60} gives
\begin{equation} \label{2.71}
\int_{B_{\delta/2 \eps}} U(|y|)^q\, dy \le c_{11}\, (\delta/\eps)^{N - \alpha q/(p-1)}
\end{equation}
for some constant $c_{11} > 0$, in particular, \eqref{2.71} holds for $q = 1$ when $p > 2N/(N + s)$ and for $q = p - 1$. Combining \eqref{2.68}--\eqref{2.71} gives
\[
J_q \le c_{12}\, \delta^{\alpha\, (p-q-1)/(p-1)}\, \eps^{\alpha q/p\,(p-1)}
\]
for some constant $c_{12} > 0$. So
\[
\sigma^{p-q}\, \tau^q J_q \le c_{12} \left(\delta^\alpha \sigma^p\right)^{1-q/p} \left((\eps/\delta)^{\alpha/(p-1)}\, \tau^p\right)^{q/p} \le c_{13}\, \delta^\alpha \sigma^p + c_{14}\, (\eps/\delta)^{\alpha/(p-1)}\, \tau^p
\]
for some constants $c_{13}, c_{14} > 0$ by Young's inequality. Combining this with \eqref{2.66} and \eqref{2.67}, and noting that
\[
I_1 + 2 I_4 = \int_{\R^{2N}} \frac{|v(x) - v(y)|^p}{|x - y|^{N+sp}}\, dx dy, \qquad I_2 + 2 I_5 = \int_{\R^{2N}} \frac{|w_{\eps,\delta/2 \theta}(x) - w_{\eps,\delta/2 \theta}(y)|^p}{|x - y|^{N+sp}}\, dx dy,
\]
we get the desired conclusion in this case.

If $1 < p < 2$, we use the elementary inequality
\[
|a + b|^p \le |a|^p + |b|^p + p\, |a||b|^{p-1} \quad \forall a, b \in \R
\]
to get
\[
I_3 \le \sigma^p I_4 + \tau^p I_5 + p\, \sigma \tau^{p-1} J_{p-1}
\]
and proceed as above.
\end{proof}

By \eqref{2.65} and Lemma \ref{Lemma 2.12},
\begin{equation} \label{3.22}
E_0(\sigma v + \tau w_{\eps,\delta/2 \theta}) \le E_0(\sigma v) + c_6\, \delta^\alpha \sigma^p + E_0(\tau w_{\eps,\delta/2 \theta}) + \frac{c_7}{p}\, (\eps/\delta)^{\alpha/(p-1)}\, \tau^p.
\end{equation}
Since $C \subset \Psi^{\lambda_k + c_1\, \delta^\alpha}$ by Lemma \ref{Lemma 2.10},
\begin{multline}
E_0(\sigma v) + c_6\, \delta^\alpha \sigma^p \le \sigma^p \left(\frac{1}{p} \int_{\R^{2N}} \frac{|v(x) - v(y)|^p}{|x - y|^{N+sp}}\, dx dy - \frac{\lambda}{p} \int_\Omega |v|^p\, dx + c_6\, \delta^\alpha\right)\\[10pt]
= \sigma^p \left(1 - \frac{\lambda}{\Psi(v)} + c_6\, \delta^\alpha\right) \le \sigma^p \left(1 - \frac{\lambda}{\lambda_k + c_1\, \delta^\alpha} + c_6\, \delta^\alpha\right) \le 0
\end{multline}
if $\delta > 0$ is sufficiently small. By \eqref{2.61},
\begin{multline} \label{3.24}
E_0(\tau w_{\eps,\delta/2 \theta}) + \frac{c_7}{p}\, (\eps/\delta)^{\alpha/(p-1)}\, \tau^p = \frac{\tau^p}{p}\, \bigg(\int_{\R^{2N}} \frac{|w_{\eps,\delta/2 \theta}(x) - w_{\eps,\delta/2 \theta}(y)|^p}{|x - y|^{N+sp}}\, dx dy\\[10pt]
- \lambda \int_\Omega w_{\eps,\delta/2 \theta}^p\, dx + c_7\, (\eps/\delta)^{\alpha/(p-1)}\bigg) - \frac{\tau^{p_s^\ast}}{p_s^\ast} \le \frac{s}{N}\, \bigg(\int_{\R^{2N}} \frac{|w_{\eps,\delta/2 \theta}(x) - w_{\eps,\delta/2 \theta}(y)|^p}{|x - y|^{N+sp}}\, dx dy\\[10pt]
- \lambda \int_\Omega w_{\eps,\delta/2 \theta}^p\, dx + c_7\, (\eps/\delta)^{\alpha/(p-1)}\bigg)^{N/sp}.
\end{multline}
Combining \eqref{3.22}--\eqref{3.24} with \eqref{2.62} and \eqref{2.63} gives
\[
\sup_{v \in C,\, \sigma, \tau \ge 0}\, E_0(\sigma v + \tau w_{\eps,\delta/2 \theta}) \le \begin{cases}
\dfrac{s}{N} \left(S + c_{15}\, (\eps/\delta)^{\alpha/(p-1)} - \lambda c_{16}\, \eps^{sp}\right)^{N/sp} & \text{if } N > sp^2\\[10pt]
\dfrac{s}{N}\, \big(S + c_{15}\, (\eps/\delta)^{sp} - \lambda c_{16}\, \eps^{sp} \abs{\log\, (\eps/\delta)}\big)^{N/sp} & \text{if } N = sp^2
\end{cases}
\]
for some constants $c_{15}, c_{16} > 0$, and \eqref{2.64} follows from this for sufficiently small $\eps > 0$. \qedsymbol

\bigbreak

\noindent{\bf \large Acknowledgement}

\medskip

\noindent This work was completed while the second author was visiting the Department of Mathematical Sciences at Florida Institute of Technology. She is grateful for the kind hospitality of the host institution.\\
The second author was partially supported by MIUR--PRIN project ``Qualitative and quantitative aspects of nonlinear PDEs'' (2017JPCAPN\underline{\ }005) and is member of the Research Group INdAM–GNAMPA.

\def\cdprime{$''$}

\end{document}